%%%%%%%%%%%%%%%%%%%%%%%%%%%%%%%%%%%%%%%%%%%%%%%%%%%%%%%%%%%%%%%%%%%%%%%%%%
%\documentclass{elsarticle}
\documentclass[reqno]{amsart}
\usepackage{amssymb, amsmath, amscd}
\usepackage{color}
\date{Version v7a - last changed 23 February 2010 by PBG}
%\def\red{\color{red}}
%%%%%%%%%%%%%%%%%%%%%%%%%%%%%%%%%%%%%%%%%%
%\newproof{proof}{Proof}
\newtheorem{theorem}{Theorem}

\newtheorem{remark}{Remark}
\def\qedbox{\hbox{$\rlap{$\sqcap$}\sqcup$}}
\def\mapright#1{\smash{\mathop{\longrightarrow}\limits\sp{#1}}}
\def\rho{\operatorname{Ric}}\def\varepsilon{h}
\def\rhoa{\Lambda\operatorname{Ric}}
\def\rhoR#1{R_{#1}}\def\rhoA#1{A_{#1}}
\def\langle{h(}\def\rangle{)}
%\makeatletter
%\renewcommand{\theequation}{%
%\thesection.\alph{equation}} \@addtoreset{equation}{section}
%\makeatother
\begin{document}
\title{Geometric realizations, curvature decompositions, and Weyl manifolds}
\author{Peter  Gilkey, Stana Nik\v cevi\'c, and Udo Simon}
\address{PG: Mathematics Department, University of Oregon\\
  Eugene OR 97403 USA\\
  E-mail: gilkey@uoregon.edu}
\address{SN: Mathematical Institute, Sanu,
Knez Mihailova 36, p.p. 367,
11001 Belgrade,
Serbia.\\ Email: stanan@mi.sanu.ac.rs}
\address{US: Institut f\"ur Mathematik, Technische Universit\"at
Berlin \newline  Strasse des 17. Juni 135, D-10623 Berlin, Germany\\ Email: simon@math.tu-berlin.de}
\begin{abstract}{We show any Weyl curvature model can be geometrically realized by a Weyl manifold\\
Keywords: Affine curvature tensor, affine manifold, Higa curvature decomposition, torsion free connection,
Riemann curvature tensor, pseudo-Riemannian
manifold, Weyl curvature tensor, Weyl manifold\\
MSC: 53B05, 15A72, 53A15,
53B10, 53C07, 53C25.}\end{abstract}
\maketitle
\section{Introduction}\label{sect-1}

\subsection{Weyl geometry} Consider a
triple
$\mathcal{W}:=(N,g,\nabla)$ where
$g$ is a pseudo-Riemannian metric on a smooth $n$ dimensional manifold $N$ and where $\nabla$ is a torsion free connection on the tangent bundle
$TN$ of $N$. We suppose $n\ge2$ henceforth. We say that
$\mathcal{W}$ is a {\it Weyl manifold} if the following identity is satisfied:
\begin{equation}\label{eqn-1.a}
\nabla g=-2\phi\otimes g\quad\text{for some}\quad\phi\in C^\infty(T^*N)\,.
\end{equation}
This notion is conformally invariant. If $\mathcal{N}=(N,g,\nabla)$ is a Weyl manifold, then $\tilde{\mathcal{N}}:=(N,e^{2f}g,\nabla)$ is
again a Weyl manifold where $\tilde\phi:=\phi-df$. The simultaneous transformation of the pair $(g,\phi)$ is called a {\it gauge
transformation}, properties of the Weyl geometry that are invariant under gauge transformations are called {\it gauge invariants}. Let
$\nabla^g$ be the Levi-Civita connection defined by the pseudo-Riemannian metric $g$. There exists a conformally equivalent metric
$\tilde g$ locally so that $\nabla=\nabla^{\tilde g}$ if and only if $d\phi=0$; if $d\phi=0$, such a class exists
globally if and only if $[\phi]=0$ in de Rham cohomology. The metric is said to be {\it Einstein-Weyl} if the symmetrized Ricci tensor associated
to the connection $\nabla$ is a multiple of the metric; this condition is the natural analogue within conformal geometry of
Einstein's equation in Riemannian geometry. Thus Einstein-Weyl structures are of central importance.

Weyl \cite{W22} used these geometries in an attempt to unify gravity
with electromagnetism -- although this approach failed for physical reasons, the resulting geometries are still of importance
\cite{AI03,B98,C01,I02,N05,O06,PS91} and also appear in mathematical physics
\cite{C00,DT02,DMT01,I00,M04,NS04,OP98,P98}. We refer to \cite{H93,H94} for further details
concerning Weyl manifolds. The pseudo-Riemannian setting also is important
\cite{M01,S09} as is submanifold geometry \cite{M02} and contact geometry \cite{G09,M02x}. There are relations with para-conformal structures
\cite{D06} and spin geometry \cite{B00}. The literature in the field is vast and we can only give a flavor of it for reasons of brevity.

\subsection{Affine and pseudo-Riemannian geometry} Weyl geometry fits in between affine and pseudo-Riemannian geometry. We say that the pair
$\mathcal{A}:=(N,\nabla)$ is an {\it affine manifold} if
$\nabla$ is a torsion free connection. We say that the pair $\mathcal{N}:=(N,g)$ is a {\it pseudo-Riemannian manifold} if $g$ is a
pseudo-Riemannian metric on $N$. Since $\nabla^g$ is torsion free and $\nabla^g g=0$, the triple
$(N,g,\nabla^g)$ is a Weyl manifold. There are, however, examples with $d\phi\ne0$ so Weyl geometry is more general than
pseudo-Riemannian geometry or even conformal pseudo-Riemannian geometry. Every Weyl manifold gives rise to an underlying affine and an underlying
pseudo-Riemannian manifold; Equation (\ref{eqn-1.a}) provides the link between these two structures.

\subsection{Curvature} Let $\mathcal{R}(x,y)z:=(\nabla_x\nabla_y-\nabla_y\nabla_x-\nabla_{[x,y]})z$ be
the {\it curvature operator} of a torsion free connection $\nabla$. We use the metric to lower indices and define the $g$-associated
{\it curvature  tensor}
$R(x,y,z,w):=g(\mathcal{R}(x,y)z,w)$. This tensor belongs to $\otimes^4(T^*N)$ and satisfies the identities:
\begin{eqnarray}
&&R(x,y,z,w)+R(y,x,z,w)=0,\quad\text{and}\label{eqn-1.b}\\
&&R(x,y,z,w)+R(y,z,x,w)+R(z,x,y,w)=0\label{eqn-1.c}\,;
\end{eqnarray}
the symmetry of Equation (\ref{eqn-1.c}) is called the {\it Bianchi identity}. 
We define the {\it Ricci tensor}
$\rho$  by setting:
$$
\rho(x,y):=\operatorname{Tr}\{z\rightarrow\mathcal{R}(z,x)y\} \,.
$$

If $\nabla$ satisfies Equation (\ref{eqn-1.a}), i.e. if $(N,g,\nabla)$ is a Weyl manifold, then there is an additional curvature symmetry we
shall derive in Theorem \ref{thm-3.1} (see \cite{H94}):
\begin{equation}\label{eqn-1.d}
R(x,y,z,w)+R(x,y,w,z)=\textstyle\frac2n\{\rho(y,x) - \rho(x,y)\}g(z,w)\,.
\end{equation}
Similarly, if $\nabla=\nabla^g$ arises from pseudo-Riemannian geometry, then one has an additional symmetry:
\begin{equation}\label{eqn-1.e}
R(x,y,z,w) + R(x,y,w,z)=0 \,.
\end{equation}

\subsection{The algebraic context}
It is convenient to work in an abstract algebraic setting. Let $V$ be a real vector space of dimension $n \ge 2$ which is
equipped with a non-degenerate symmetric inner product $h$. Let
$\mathfrak{R}(V)\subset\otimes^4V^*$ be the space of all {\it generalized curvature tensors};
$A\in\mathfrak{R}(V)$ if and only if $A$ satisfies the symmetries given in Equations (\ref{eqn-1.b}) and (\ref{eqn-1.c}). 

The space of {\it algebraic curvature tensors}
$\mathfrak{A}(V)\subset\mathfrak{R}(V)$ is the subspace defined by imposing in addition the symmetry of Equation
(\ref{eqn-1.e}). An immediate algebraic consequence of Equations (\ref{eqn-1.b}), (\ref{eqn-1.c}), and (\ref{eqn-1.e}) is
the additional symmetry
\begin{equation}\label{eqn-1.f}
A(x,y,z,w)=A(z,w,x,y)\,.
\end{equation}
Note that the relations of Equation (\ref{eqn-1.e}) and of Equation (\ref{eqn-1.f}) are equivalent in the presence of Equations
(\ref{eqn-1.b}) and (\ref{eqn-1.c}) -- see, for example, the discussion in \cite{GNU09}.

Weyl geometry is intermediate between pseudo-Riemannian and affine geometry. We define the subspace of {\it Weyl generalized curvature tensors}
$\mathfrak{W}(V)$ by imposing Equations (\ref{eqn-1.b}), (\ref{eqn-1.c}), and (\ref{eqn-1.d}). Then:
$$\mathfrak{A}(V)\subset\mathfrak{W}(V)\subset\mathfrak{R}(V)\,.$$

%%%%%%%%%%%%%%%%%%%%%%%%%%%%%%%%%%%%%%%%%%%%%%%%%%%%%%%%%%%%%%%%%
\subsection{Geometric realization of curvature tensors}
We shall say that a generalized curvature tensor $A \in \mathfrak{R}(V)$ is {\it realized geometrically by an
affine manifold} $(N,\nabla)$ if there exists a point $P\in N$ and an isomorphism $\Xi:V\rightarrow T_PN$ so that $\Xi^*R_P=A$. The
following result
\cite{BNGS06} shows that Equations (\ref{eqn-1.b}) and (\ref{eqn-1.c}) generate the universal symmetries of a torsion free connection:

\begin{theorem}\label{thm-1.1}
 Every generalized curvature tensor can be realized geometrically by an affine manifold.
\end{theorem}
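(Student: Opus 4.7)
The plan is to realize $A$ by an explicit affine connection on $N=\mathbb{R}^n$, using coordinates centered at the origin $P=0$ and the obvious isomorphism $\Xi\colon V\to T_0\mathbb{R}^n$ sending a basis of $V$ to $\{\partial_i|_0\}$. The key observation is that if the Christoffel symbols vanish at $P$, then the quadratic $\Gamma\cdot\Gamma$ terms in the curvature formula drop out at $P$, so the curvature at $P$ becomes a purely linear function of the first Taylor coefficients of $\Gamma$. This reduces the geometric problem to a purely linear algebra problem.

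More concretely, I would make the ansatz
\begin{equation*}
\Gamma_{jk}^{\,l}(x) \;=\; c_{jki}^{\,l}\,x^{i},
\end{equation*}
imposing $c_{jki}^{\,l}=c_{kji}^{\,l}$ so that $\nabla$ is torsion free. Since $\Gamma(0)=0$, the standard formula for the curvature operator gives
\begin{equation*}
R^{\,l}_{ijk}(0) \;=\; \partial_i\Gamma^{\,l}_{jk}(0)-\partial_j\Gamma^{\,l}_{ik}(0)
\;=\; c_{jki}^{\,l}-c_{ikj}^{\,l}.
\end{equation*}
Thus Theorem \ref{thm-1.1} is equivalent to solving, for each tensor $A^{\,l}_{ijk}$ satisfying Equations (\ref{eqn-1.b}) and (\ref{eqn-1.c}), the linear system $c_{jki}^{\,l}-c_{ikj}^{\,l}=A^{\,l}_{ijk}$ subject to $c_{jki}^{\,l}=c_{kji}^{\,l}$.

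To produce such a $c$, I would try the symmetrized ansatz $c_{jki}^{\,l}:=\tfrac13\bigl(A^{\,l}_{ijk}+A^{\,l}_{ikj}\bigr)$, which is manifestly symmetric in $(j,k)$. Substituting back and using the antisymmetry of Equation (\ref{eqn-1.b}) in the first two slots of $A$, together with the first Bianchi identity of Equation (\ref{eqn-1.c}) to rewrite the leftover cyclic combination as a single copy of $A^{\,l}_{ijk}$, gives the required identity. One then takes $\nabla$ with these Christoffel symbols on $N=\mathbb{R}^n$; the pair $(N,\nabla)$ is an affine manifold and $\Xi^{*}R_{P}=A$ by construction.

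I expect the only real obstacle to be the algebraic verification in the last step: one must check that the Bianchi identity is exactly what is needed to collapse the symmetrized combination back to $A^{\,l}_{ijk}$. Conceptually, this is not surprising, since Equations (\ref{eqn-1.b}) and (\ref{eqn-1.c}) are the only universal symmetries of curvatures of torsion free connections, so the linear map $c\mapsto c_{jki}^{\,l}-c_{ikj}^{\,l}$ from symmetric (in $j,k$) tensors must surject exactly onto $\mathfrak{R}(V)$. The remaining items — torsion freeness, smoothness, and the identification $\Xi^{*}R_{P}=A$ — are immediate from the construction.
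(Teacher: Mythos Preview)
Your argument is correct. The ansatz $c_{jki}^{\,l}:=\tfrac13(A^{\,l}_{ijk}+A^{\,l}_{ikj})$ is symmetric in $(j,k)$, and a short computation using $A^{\,l}_{jik}=-A^{\,l}_{ijk}$ together with the first Bianchi identity gives
\[
c_{jki}^{\,l}-c_{ikj}^{\,l}=\tfrac13\bigl(2A^{\,l}_{ijk}+A^{\,l}_{ikj}-A^{\,l}_{jki}\bigr)=\tfrac13\bigl(2A^{\,l}_{ijk}+A^{\,l}_{ijk}\bigr)=A^{\,l}_{ijk},
\]
so the linear problem is solved exactly as you describe.

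Note, however, that the paper does not supply its own proof of Theorem~\ref{thm-1.1}; it is quoted as a known result from \cite{BNGS06}. Your construction is in fact the standard one and is very much in the spirit of what the paper does later: in Section~\ref{sect-4} the proof of Theorem~\ref{thm-1.3} uses the same ``linear Christoffel symbols vanishing at the basepoint'' trick, writing $\alpha_{ijk}$ as an explicit linear function of the coordinates so that the quadratic terms in the curvature formula drop out at $P$. The difference is only that the paper, aiming at the Weyl case, first invokes Theorem~\ref{thm-1.2} to realize the $\mathfrak{A}(V)$-component and then adds a linear $\alpha$ built from the $\Lambda^2$-component, whereas you handle the full $\mathfrak{R}(V)$ in one stroke. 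Your approach is the more elementary and self-contained of the two for the purely affine statement.
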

We have a  corresponding result for algebraic curvature tensors. We say that a triple $(V,h,A)$ 
is a {\it pseudo-Riemannian curvature model} if $h$ is a non-degenerate symmetric inner product on $V$ and if
$A\in\mathfrak{A}(V)$ is an algebraic curvature tensor. Such a model is said to be {\it geometrically realized by a pseudo-Riemannian manifold}
$(N,g)$ if there exists a point $P\in N$ and an isomorphism $\Xi:V\rightarrow T_PN$ so that $\Xi^*g_P=
h$ and so that
$\Xi^*R_P=A$. The following is well known (see, for example, the discussion in
\cite{BNGS06}) and shows that Equations (\ref{eqn-1.b}), (\ref{eqn-1.c}), and (\ref{eqn-1.e}) generate the universal symmetries of the
curvature tensor of the Levi-Civita connection:
\begin{theorem}\label{thm-1.2}
Every pseudo-Riemannian curvature model can be realized geometrically by a pseudo-Riemannian manifold.
\end{theorem}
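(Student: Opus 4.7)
The plan is to construct the realization explicitly by prescribing a metric on a neighborhood of the origin in $\mathbb{R}^n$ whose $2$-jet at $0$ encodes $A$. Fix a basis $\{e_1,\dots,e_n\}$ of $V$ and set $h_{ij}:=h(e_i,e_j)$ and $A_{ijkl}:=A(e_i,e_j,e_k,e_l)$. In standard coordinates $(x^1,\dots,x^n)$, define
\begin{equation*}
g_{ij}(x) \,:=\, h_{ij} + \tfrac{1}{3}\sum_{k,\ell} A_{ikj\ell}\,x^k x^\ell.
\end{equation*}
Symmetry $g_{ij}=g_{ji}$ is forced by the pair symmetry $A_{ikj\ell}=A_{j\ell ik}$ recorded in Equation~(\ref{eqn-1.f}), and since $g(0)=h$ is non-degenerate, $g$ defines a smooth pseudo-Riemannian metric on a sufficiently small neighborhood $N$ of the origin. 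The isomorphism $\Xi\colon V\to T_0 N$ sending $e_i\mapsto\partial_i|_0$ then satisfies $\Xi^*g_0=h$ by construction.

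It remains to verify $\Xi^*R_0 = A$ for the Riemann tensor $R$ of the Levi-Civita connection of $g$. Because $g-h$ vanishes to second order at $0$, all first derivatives of $g$, and hence all Christoffel symbols, vanish at the origin. Consequently the standard coordinate expression for curvature reduces at $0$ to
\begin{equation*}
R_{ijkl}(0)=\tfrac{1}{2}\bigl(\partial_i\partial_k g_{j\ell}-\partial_i\partial_\ell g_{jk}-\partial_j\partial_k g_{i\ell}+\partial_j\partial_\ell g_{ik}\bigr)(0),
\end{equation*}
and a direct differentiation of the definition of $g$ yields $\partial_a\partial_b g_{ij}(0)=\tfrac{1}{3}(A_{iajb}+A_{ibja})$.

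Substituting produces a sum of eight terms in $A$. The plan for the final step is to reduce this sum using the two skew symmetries Equations~(\ref{eqn-1.b}) and~(\ref{eqn-1.e}) together with the pair symmetry Equation~(\ref{eqn-1.f}), then apply the first Bianchi identity Equation~(\ref{eqn-1.c}) once to collapse what remains to the single term $A_{ijkl}$. This combinatorial verification is the only nontrivial step: the bookkeeping must be done carefully enough to confirm that the symmetries encoded in Equations~(\ref{eqn-1.b}), (\ref{eqn-1.c}), and~(\ref{eqn-1.e}) are exactly sufficient to extract $A$ itself from the symmetrized quadratic coefficient of $g$, rather than some other element of $\mathfrak{A}(V)$ that shares a suitable symmetrization. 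Granting this, the pseudo-Riemannian manifold $(N,g)$ realizes $(V,h,A)$ at $P=0$ via $\Xi$, completing the proof of Theorem~\ref{thm-1.2}.
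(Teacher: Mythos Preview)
The paper does not prove Theorem~\ref{thm-1.2}; it simply records it as well known and cites \cite{BNGS06}. So there is no ``paper's own proof'' to compare against. Your argument is the standard explicit construction and is correct in outline and in detail; the one step you defer---the combinatorial reduction---does go through, and it is worth writing out so the proof is complete rather than conditional.

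Concretely, from $\partial_a\partial_b g_{ij}(0)=\tfrac{1}{3}(A_{iajb}+A_{ibja})$ and your curvature formula one obtains
\[
R_{ijkl}(0)=\tfrac{1}{6}\bigl(4A_{ijkl}+2A_{ilkj}-2A_{iklj}\bigr)
\]
after using the skew symmetries (\ref{eqn-1.b}), (\ref{eqn-1.e}) and the pair symmetry (\ref{eqn-1.f}) to collapse four of the eight terms to $A_{ijkl}$ and to identify $A_{jkli}=A_{ilkj}$, $A_{jlki}=A_{iklj}$. Applying the Bianchi identity (\ref{eqn-1.c}) in the form $A_{iklj}+A_{klij}+A_{likj}=0$, together with (\ref{eqn-1.b}) and (\ref{eqn-1.f}), yields $A_{ilkj}-A_{iklj}=A_{ijkl}$, and hence $R_{ijkl}(0)=A_{ijkl}$ as required. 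Including these two lines removes the ``granting this'' and makes the proof self-contained.
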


We say that a triple $(V,h,A)$ is a {\it Weyl curvature model} if $h$ is a non-degenerate
symmetric inner product on $V$ and if $A\in\mathfrak{W}(V)$ is a Weyl generalized curvature tensor. Such a model is said to be {\it geometrically
realized by a Weyl manifold}
$(N,g,\nabla)$ if there exists a point $P\in N$ and if there exists an isomorphism $\Xi:V\rightarrow T_PN$ so that
$\Xi^*g_P=h$ and so that
$\Xi^*R_P=A$. The following result extends Theorem \ref{thm-1.1} and Theorem \ref{thm-1.2} to the Weyl setting; it shows that Equations 
(\ref{eqn-1.b}), (\ref{eqn-1.c}), and (\ref{eqn-1.d}) generate the universal symmetries of the curvature tensor in Weyl geometry. It shows that
one can pass from the algebraic setting to the geometric setting and provides thereby many new examples.

\begin{theorem}\label{thm-1.3}
Every Weyl curvature model can be realized geometrically by a Weyl manifold.
\end{theorem}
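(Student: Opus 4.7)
The plan is to reduce the Weyl realization problem to the pseudo-Riemannian case (Theorem~\ref{thm-1.2}) by exploiting the fact that every Weyl connection differs from a Levi-Civita connection by a universally determined $(1,2)$-tensor built from a 1-form. Concretely, given any metric $g$ and any 1-form $\phi$, the formula
$$
C(X,Y):=\phi(X)Y+\phi(Y)X-g(X,Y)\phi^{\sharp}
$$
produces a symmetric $(1,2)$-tensor such that $\nabla:=\nabla^{g}+C$ is torsion free and satisfies $\nabla g=-2\phi\otimes g$. Hence $(N,g,\nabla)$ is automatically a Weyl manifold, and the problem reduces to choosing $g$ and the 1-jet of $\phi$ at a point $P$ so as to hit the prescribed curvature $A$.

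The key algebraic input is a decomposition of $A\in\mathfrak{W}(V)$. For $\omega\in\Lambda^{2}V^{*}$ we introduce
$$
\Psi_{\omega}(x,y,z,w):=\omega(x,y)h(z,w)+\tfrac{1}{2}\bigl[\omega(x,z)h(y,w)-\omega(y,z)h(x,w)\bigr]-\tfrac{1}{2}\bigl[\omega(x,w)h(y,z)-\omega(y,w)h(x,z)\bigr].
$$
A direct computation, in which the displayed coefficients are forced by the Bianchi identity~(\ref{eqn-1.c}), shows $\Psi_{\omega}\in\mathfrak{W}(V)$ and that its Ricci tensor equals $-\tfrac{n}{2}\omega$, in particular is purely antisymmetric. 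Denoting by $\rho^{a}$ the antisymmetric part of the Ricci tensor of $A$ and setting $\omega:=-\tfrac{2}{n}\rho^{a}$, the tensor $A_{1}:=A-\Psi_{\omega}$ lies in $\mathfrak{W}(V)$ with symmetric Ricci, so the Weyl identity~(\ref{eqn-1.d}) collapses to~(\ref{eqn-1.e}) for $A_{1}$, placing $A_{1}\in\mathfrak{A}(V)$.

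With the splitting $A=A_{1}+\Psi_{\omega}$ in hand, apply Theorem~\ref{thm-1.2} to realize $(V,h,A_{1})$ by a pseudo-Riemannian manifold $(N,g)$ at a point $P$, via an isomorphism $\Xi:V\to T_{P}N$. Working in $\nabla^{g}$-normal coordinates centered at $P$ and identifying $V\cong T_{P}N$ through $\Xi$, choose any 1-form $\phi$ on $N$ with $\phi(P)=0$ and $(d\phi)_{P}=\omega$; for instance $\phi:=\tfrac{1}{2}\omega_{ij}x^{i}dx^{j}$ suffices. Set $\nabla:=\nabla^{g}+C$ with $C$ as above, so that $(N,g,\nabla)$ is a Weyl manifold by construction.

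The remaining step, and the main technical obstacle, is to verify $R^{\nabla}(P)=A$. From the standard formula
$$
\mathcal{R}^{\nabla}(X,Y)Z=\mathcal{R}^{g}(X,Y)Z+(\nabla^{g}_{X}C)(Y,Z)-(\nabla^{g}_{Y}C)(X,Z)+[C(X),C(Y)]Z,
$$
the conditions $\phi(P)=0$ and the use of $\nabla^{g}$-normal coordinates kill the bracket term and reduce the covariant derivatives of $C$ to ordinary partial derivatives at $P$. Consequently $R^{\nabla}(P)-R^{g}(P)$ depends only on $h$ and $(d\phi)_{P}=\omega$, and a direct coordinate computation shows this correction coincides with $\Psi_{\omega}$. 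The main obstacle is precisely this matching of coefficients: both the ansatz for $\Psi_{\omega}$ (dictated by the Weyl and Bianchi symmetries) and the tensor produced by differentiating $C$ are a priori independent linear combinations of $\omega$ and $h$, and the content of the proof is that these two expressions agree. Once they do, $R^{\nabla}(P)=A_{1}+\Psi_{\omega}=A$, yielding the required realization.
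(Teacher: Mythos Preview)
Your approach is essentially identical to the paper's: the decomposition $A=A_1+\Psi_\omega$ is exactly the Higa splitting $\mathfrak{W}(V)=\mathfrak{A}(V)\oplus\sigma(\Lambda^2V^*)$ used in Theorem~\ref{thm-2.1} (your $\Psi_\omega$ is $\tfrac12\sigma(\omega)$), and the construction---realize $A_1$ by Theorem~\ref{thm-1.2}, take normal coordinates, choose a linear $1$-form, form the Weyl connection via Theorem~\ref{thm-3.1}, and compute---matches Section~\ref{sect-4} step for step.

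There is, however, one incorrect assertion that you should fix. You write ``choose \emph{any} $1$-form $\phi$ with $\phi(P)=0$ and $(d\phi)_P=\omega$\,\ldots\ Consequently $R^\nabla(P)-R^g(P)$ depends only on $h$ and $(d\phi)_P$.'' This is false: with $\phi(P)=0$ the correction at $P$ equals
\[
(\nabla^g_X\phi)(Y)\,h(Z,W)+(\nabla^g_X\phi)(Z)\,h(Y,W)-(\nabla^g_X\phi)(W)\,h(Y,Z)-(X\leftrightarrow Y),
\]
and the last four terms involve the \emph{full} $(\nabla^g\phi)_P$, not just its antisymmetric part. Indeed, if $\phi=df$ with $f(P)=0$, $df(P)=0$, then $d\phi=0$ but $\nabla=\nabla^{e^{2f}g}$ and $R^\nabla(P)-R^g(P)$ is the Kulkarni--Nomizu term built from $\operatorname{Hess}_g f(P)$, which need not vanish. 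Your specific choice $\phi=\tfrac12\omega_{ij}x^i\,dx^j$ works precisely because $(\partial_i\phi_j)(P)=\tfrac12\omega_{ij}$ is already antisymmetric, so the symmetric part is absent; the paper likewise takes a linear $\phi$ with antisymmetric coefficient matrix. Replace ``any $\phi$'' by this particular $\phi$ (or more generally require $(\nabla^g\phi)_P$ to be skew) and the argument is complete.
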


\begin{remark}
\rm If $(N,g,\nabla)$ geometrically realizes $A$ at a point $P \in N$, by considering a suitable conformal deformation $(N,e^{2f}g,\nabla)$,
we can use the Cauchy-Kovalevskaya Theorem to construct a Weyl manifold where $f=O(|x-P|^3)$ which has constant scalar curvature and which
realizes $A$ at
$P$. The argument is essentially the same as that used in \cite{BGKNW09} to establish a similar fact in the pseudo-Riemannian setting so we omit
details in the interests of brevity.
\end{remark}

\subsection{Conjugate tensors}
Let $\{e_1,...,e_n\}$ be an orthonormal basis for $(V,h)$. We can contract indices to define a Ricci-type tensor $\rho^*$  and the {\it conjugate
curvature tensor}
$R^*$ by setting: 
$$
\operatorname{Ric}^*(x,y):=\sum_iR(x,e_i,e_i,y),\quad
R^*(x,y,w,z)=-R(x,y,z,w)\,.
$$
As pointed out in Section 2.3 of \cite{GNU09}, for 
$R \in \mathfrak{R}(V)$, the conjugate curvature tensor
does not necessarily satisfy Equation (\ref{eqn-1.c}), thus 
$R^*$ need not be an element of $ \mathfrak{R}(V)$, in general.
Note that $\operatorname{Ric}^*(R) = \operatorname{Ric}(R^*)$. We present the following result, which is of interest in its own right, as an
application of the techniques which we shall derive in this paper:

\begin{theorem}\label{thm-1.5}
 Assume $n\ge3$. 
\begin{enumerate}
\item Let $A\in\mathfrak{W}(V)$ be a Weyl generalized curvature tensor. Then the conjugate tensor
$A^*$ satisfies the Bianchi identity if and only if
$A$ is an algebraic curvature tensor.
\item Let $\mathcal{N}=(N,g,\nabla)$ be a Weyl manifold where $H^1(N;\mathbb{R})=0$. Assume that the conjugate curvature tensor is a
generalized curvature tensor. Then there exists $f\in C^\infty(N)$ so that $\nabla$ is the Levi-Civita connection of the
conformally equivalent metric $e^{2f}g$.
\end{enumerate}\end{theorem}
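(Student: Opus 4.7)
The plan is to treat part (1) algebraically and then deduce part (2) via a global argument. For part (1), set $\alpha(x,y) := \rho(y,x)-\rho(x,y)$, the antisymmetric part of the Ricci tensor of $A$. Rearranging the Weyl symmetry (\ref{eqn-1.d}) gives
$$A^*(x,y,z,w) = -A(x,y,w,z) = A(x,y,z,w) - \tfrac{2}{n}\alpha(x,y)h(z,w).$$
Cycling over $(x,y,z)$ and invoking the ordinary Bianchi identity (\ref{eqn-1.c}) for $A$, one obtains
$$A^*(x,y,z,w)+A^*(y,z,x,w)+A^*(z,x,y,w) = -\tfrac{2}{n}\bigl[\alpha(x,y)h(z,w)+\alpha(y,z)h(x,w)+\alpha(z,x)h(y,w)\bigr].$$
Hence $A^*$ satisfies Bianchi if and only if the right-hand side vanishes identically. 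When $n\ge 3$ this forces $\alpha\equiv 0$: for linearly independent $x,y$, pick a nonzero $w\in\{x,y\}^\perp$ (possible since $\dim\{x,y\}^\perp\ge n-2\ge 1$), then choose $z$ with $h(z,w)\ne 0$ by non-degeneracy of $h$; the identity then pins $\alpha(x,y)=0$, and antisymmetry of $\alpha$ handles the linearly dependent case. With $\alpha\equiv 0$, (\ref{eqn-1.d}) collapses to (\ref{eqn-1.e}) and $A\in\mathfrak{A}(V)$. The converse direction is immediate, since an algebraic curvature tensor has symmetric Ricci tensor by the pair symmetry (\ref{eqn-1.f}).

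For part (2) I would first derive the pointwise identity
$$R(X,Y,Z,W)+R(X,Y,W,Z) = 2\,d\phi(X,Y)\,g(Z,W).$$
This is obtained by differentiating $\nabla g=-2\phi\otimes g$ a second time at a point using $\nabla$-parallel vector fields, antisymmetrising in $(X,Y)$, and applying the commutator formula $(R(X,Y)g)(Z,W)=-R(X,Y,Z,W)-R(X,Y,W,Z)$ together with the torsion-free relation $d\phi(X,Y)=(\nabla_X\phi)(Y)-(\nabla_Y\phi)(X)$; it is essentially the content of Theorem \ref{thm-3.1}. Comparison with (\ref{eqn-1.d}) yields $\rho(Y,X)-\rho(X,Y) = n\,d\phi(X,Y)$. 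Applying part (1) pointwise, the hypothesis that the conjugate curvature tensor is everywhere a generalized curvature tensor forces $R_P\in\mathfrak{A}(T_PN)$ at every $P$, so $\rho$ is symmetric on $N$ and therefore $d\phi=0$ globally. Since $H^1(N;\mathbb{R})=0$, there exists $f\in C^\infty(N)$ with $\phi=df$; setting $\tilde g:=e^{2f}g$, the gauge transformation rule $\tilde\phi=\phi-df=0$ gives $\nabla\tilde g=0$, whence $\nabla=\nabla^{\tilde g}$ by uniqueness of the Levi-Civita connection.

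The main obstacle is the algebraic step in part (1) that extracts $\alpha\equiv 0$ from the vanishing of the cyclic expression; this is where the hypothesis $n\ge 3$ enters essentially, and it is worth being explicit about the dimension counting used to produce $w\in\{x,y\}^\perp$. Once both $\alpha\equiv 0$ and the identity $\rho(Y,X)-\rho(X,Y)=n\,d\phi(X,Y)$ are in hand, part (2) follows routinely from the cohomological hypothesis.
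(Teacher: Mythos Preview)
Your proof is correct, and part (2) matches the paper's argument essentially verbatim: apply part (1) pointwise to get $\rho$ symmetric, invoke Theorem \ref{thm-3.1} to conclude $d\phi=0$, integrate using $H^1=0$, and gauge away $\phi$.

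In part (1) you take a genuinely more direct route than the paper. The paper invokes the Higa decomposition of Theorem \ref{thm-2.1}, writing $A=A_1+\sigma(\psi)$ with $A_1\in\mathfrak{A}(V)$, and then computes the Bianchi defect of $(\sigma\psi)^*$ explicitly from the formula for $\sigma$, obtaining $-4\bigl[\psi(x,y)h(z,w)+\psi(y,z)h(x,w)+\psi(z,x)h(y,w)\bigr]$; it then specializes to orthonormal basis vectors $e_i,e_j,e_k,e_k$ with distinct $i,j,k$ to kill $\psi$. You bypass the decomposition entirely: the Weyl identity (\ref{eqn-1.d}) already says $A^*=A-\tfrac{2}{n}\alpha\otimes h$, so the Bianchi sum of $A^*$ is immediately the cyclic sum of $-\tfrac{2}{n}\alpha\otimes h$, and your choice of $w\in\{x,y\}^\perp$ and $z$ with $h(z,w)\ne0$ isolates $\alpha(x,y)$. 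The two computations are equivalent (indeed $\alpha=2n\psi$ via Theorem \ref{thm-2.1}(1)), but your argument is self-contained and does not rely on the structure theorem for $\mathfrak{W}(V)$; the paper's version, on the other hand, illustrates the utility of the decomposition that is one of its main themes.
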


 Here is a brief outline to the paper. In Section \ref{sect-2}, we discuss a
curvature decomposition of $\mathfrak{W}(V)$ under the action of the orthogonal group $\mathcal{O}=O(V,h)$ which is based
on work of Higa
\cite{H94}. In Section \ref{sect-3}, we establish some basic geometrical properties of Weyl manifolds and give an algorithm for constructing
Weyl manifolds. Section \ref{sect-4} is devoted to the proof of Theorem \ref{thm-1.3} and Section \ref{sect-6} is devoted to the proof of
Theorem \ref{thm-1.5}. The Higa curvature decomposition discussed in Section \ref{sect-2}
is central to our analysis; in a subsequent paper
\cite{GNU10}, we will examine algebraic properties of this decomposition in more detail.

\section{Curvature decompositions}\label{sect-2}
Let $\rhoa(x,y):=\textstyle\frac12(\rho(x,y)-\rho(y,x))$ be the {\it alternating Ricci tensor}; one has that $\rhoa\in\Lambda^2(V^*)$. 
We now define $\sigma:\Lambda^2(V^*)\rightarrow\otimes^4V^*$. If $\psi\in\Lambda^2(V^*)$, set:
\begin{eqnarray*}
&&\sigma(\psi)(x,y,z,w):=2\psi(x,y)\langle z,w\rangle+\psi(x,z)\langle y,w\rangle-\psi(y,z)\langle x,w\rangle\\
&&\phantom{\sigma(\psi)(x,y,z,w):}-\psi(x,w)\langle y,z\rangle+\psi(y,w)\langle x,z\rangle\,.
\end{eqnarray*}
The following decomposition of $\mathfrak{W}(V)$ is based on work of Higa \cite{H94}; we refer to \cite{B90,GNU09,ST69} for related
results concerning decompositions of $\mathfrak{R}(V)$ and of $\mathfrak{A}(V)$ and to \cite{GNU10} for further information concerning
the curvature decomposition of $\mathfrak{W}(V)$ as an $\mathcal{O}$ module.

\begin{theorem}\label{thm-2.1}
\ \begin{enumerate}
\item $\rhoa\sigma(\psi)=-n\psi$ for $\psi\in\Lambda^2(V^*)$.
\item $\sigma:\Lambda^2(V^*)\rightarrow\mathfrak{W}(V)$.
\item $\mathfrak{A}(V)=\mathfrak{W}(V)\cap\ker(\rhoa)$.
\item There is an $\mathcal{O}$
equivariant short exact sequence
$$0\rightarrow\mathfrak{A}(V)\rightarrow\mathfrak{W}(V)\mapright{\rhoa}\Lambda^2(V^*)\rightarrow0$$
which is equivariantly split by
$-\frac1n\sigma$.

\end{enumerate}
\end{theorem}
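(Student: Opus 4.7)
The proof of the theorem splits into four pieces that cascade from a single direct computation. My plan is to establish (1) by expanding $\rho(\sigma(\psi))$ in a pseudo-orthonormal basis, then verify the three defining symmetries of $\mathfrak{W}(V)$ for $\sigma(\psi)$ in (2), and finally observe that (3) and (4) are essentially formal consequences of (1) and (2).

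For (1) I would fix $\{e_i\}$ with $h(e_i,e_j) = \epsilon_i \delta_{ij}$ and expand
\[
\rho(\sigma(\psi))(x,y) = \sum_i \epsilon_i \sigma(\psi)(e_i,x,y,e_i).
\]
Each of the five summands in the definition of $\sigma(\psi)$ evaluates to one of three types: a contraction of the form $\sum_i \epsilon_i \psi(e_i,a) h(e_i,b) = \psi(b,a)$ giving $\pm \psi(x,y)$; the trace $\sum_i \epsilon_i h(e_i,e_i) = n$ producing $-n \psi(x,y)$; or a vanishing term from $\psi(e_i,e_i) = 0$. A brief tally, using $2\psi(y,x) = -2\psi(x,y)$ from the first term, yields $\rho(\sigma(\psi))(x,y) = -n \psi(x,y)$, which is already skew in $(x,y)$, so $\rhoa(\sigma(\psi)) = -n \psi$.

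For (2) I would verify (\ref{eqn-1.b}), (\ref{eqn-1.c}), (\ref{eqn-1.d}) in turn for $\sigma(\psi)$. The first is an immediate pairing of terms using the antisymmetry of $\psi$. The Bianchi identity (\ref{eqn-1.c}) is the one step requiring care: the two summands $-\psi(x,w) h(y,z) + \psi(y,w) h(x,z)$ cancel in the cyclic sum over $(x,y,z)$ by the symmetry of $h$, while the remaining three summands combine as $2S - S - S = 0$, where $S$ is the cyclic sum $\psi(x,y) h(z,w) + \psi(y,z) h(x,w) + \psi(z,x) h(y,w)$. Equation (\ref{eqn-1.d}) then follows from the direct identity $\sigma(\psi)(x,y,z,w) + \sigma(\psi)(x,y,w,z) = 4 \psi(x,y) h(z,w)$ combined with (1).

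Parts (3) and (4) are formal. If $\rhoa(A) = 0$ with $A \in \mathfrak{W}(V)$, then the right side of (\ref{eqn-1.d}) vanishes and $A$ satisfies (\ref{eqn-1.e}), so $A \in \mathfrak{A}(V)$; conversely, for $A \in \mathfrak{A}(V)$ the pair symmetry (\ref{eqn-1.f}), combined with (\ref{eqn-1.b}) and (\ref{eqn-1.e}), forces the Ricci tensor to be symmetric, whence $\rhoa(A) = 0$ and (\ref{eqn-1.d}) holds with both sides zero. For (4), exactness at $\mathfrak{A}(V)$ is trivial, exactness at $\mathfrak{W}(V)$ is (3), and exactness at $\Lambda^2(V^*)$ together with the splitting assertion both follow from $\rhoa \circ (-\tfrac{1}{n}\sigma) = \operatorname{id}_{\Lambda^2(V^*)}$, which is precisely (1). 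The $\mathcal{O}$-equivariance of both $\sigma$ and $\rhoa$ is visible from their definitions, since both are built from $\psi$, $h$, and natural contractions, and $\mathcal{O}$ fixes $h$. The only real obstacle is the bookkeeping in the Bianchi verification for (2); no conceptual difficulty arises elsewhere.
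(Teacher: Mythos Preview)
Your proposal is correct and follows essentially the same approach as the paper: direct verification of the symmetries for $\sigma(\psi)$, computation of its Ricci tensor, and the formal deduction of (3) and (4). The only differences are cosmetic---you compute the Ricci tensor before checking the Bianchi identity and organize the Bianchi verification via the $2S-S-S$ cancellation rather than a full term-by-term expansion, and for (3) you argue directly from the symmetry of the Ricci tensor in $\mathfrak{A}(V)$ rather than via the auxiliary symmetrization operator $S$ the paper introduces---but the substance is identical.
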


\begin{proof} 

Let $\psi\in\Lambda^2(V^*)$. Let $A=\sigma(\psi)$. Clearly $A(x,y,z,w)=-A(y,x,z,w)$. We
show that $A$ satisfies the Bianchi identity by computing:
\medbreak\quad
$A(x,y,z,w)+A(y,z,x,w)+A(z,x,y,w)$
\par\qquad$=2\psi(x,y)\langle z,w\rangle
+\psi(x,z)\langle y,w\rangle-\psi(y,z)\langle x,w\rangle$
\par\qquad\phantom{.}$+2\psi(y,z)\langle x,w\rangle
+\psi(y,x)\langle z,w\rangle-\psi(z,x)\langle y,w\rangle$
\par\qquad\phantom{.}$+2\psi(z,x)\langle y,w\rangle
+\psi(z,y)\langle x,w\rangle-\psi(x,y)\langle z,w\rangle$
\par\qquad\phantom{.}$-\psi(x,w)\langle y,z\rangle+\psi(y,w)\langle x,z\rangle$
\par\qquad\phantom{.}$-\psi(y,w)\langle z,x\rangle+\psi(z,w)\langle y,x\rangle$
\par\qquad\phantom{.}$-\psi(z,w)\langle x,y\rangle+\psi(x,w)\langle z,y\rangle$
\smallbreak\qquad$=0$.
\medbreak\noindent This shows that $A\in\mathfrak{R}(V)$ is a generalized curvature tensor. To show $A\in\mathfrak{W}(V)$ is a Weyl generalized
curvature tensor, we must establish that Equation (\ref{eqn-1.d}) holds. Let
$\{e_i\}$ be a basis for $V$. Adopt the Einstein convention and sum over repeated indices. Let $\varepsilon_{ij}:=\langle e_i,e_j\rangle$
and let $\varepsilon^{ij}$ be the inverse matrix. Let
$\rhoA{ij}$ and
$A_{ijkl}$ be the components of
$\rho$ and of
$A$ relative to this basis. We have $\rhoA{jk}=\varepsilon^{il}A_{ijkl}$. We establish Assertion (1) by computing:
\begin{eqnarray*}
\rhoA{jk}&=&\varepsilon^{il}\{2\psi_{ij}\varepsilon_{kl}+\psi_{ik}\varepsilon_{jl}-\psi_{jk}\varepsilon_{il}
   -\psi_{il}\varepsilon_{jk}+\psi_{jl}\varepsilon_{ik}\}\\
&=&2\psi_{kj}+\psi_{jk}-n\psi_{jk}+0+\psi_{jk}\\
&=&-n\psi_{jk}\,.
\end{eqnarray*}

By Assertion (1), $\rho\in\Lambda^2(V^*)$. We show that $A\in\mathfrak{W}(V)$ and establish Assertion (2) by using Assertion (1)
to compute:
$$
A_{ijkl}+A_{ijlk}=4\psi_{ij}\varepsilon_{kl}=\textstyle\frac2n(\rhoA{ji}-\rhoA{ij})\varepsilon_{kl}\,.
$$

If $A\in\mathfrak{R}(V)$, let 
$$S(A)(x,y,z,w)=\textstyle\frac12\{A(x,y,z,w)+A(x,y,w,z)\}$$
be the symmetrization of $A$ in the last 2 indices; $S(A)\in\Lambda^2(V^*)\otimes S^2(V^*)$. This symmetrization plays a central role in the
curvature decomposition of $\mathfrak{R}(V)$ given by Bokan \cite{B90}. It is then immediate that
$\mathfrak{A}(V)=\ker(S)\cap\mathfrak{R}(V)$. One verifies easily that if
$A\in\mathfrak{A}(V)$ is an algebraic curvature tensor, then both sides of Equation (\ref{eqn-1.d}) vanish so $A\in\mathfrak{W}(V)$ is a Weyl
generalized curvature tensor. If
$A\in\mathfrak{W}(V)$, then
$S(A)=0$ if and only if $\rhoa(A)=0$ by Equation (\ref{eqn-1.d}). This shows 
$$\ker(\rhoa)\cap\mathfrak{W}(V)=\mathfrak{A}(V)$$ and establishes
Assertion (3); Assertion (4) is a direct consequence of Assertions (1), (2), and (3).
\end{proof}

\section{The geometry of Weyl manifolds}\label{sect-3}
Let $(N,g,\nabla)$ be a Weyl manifold. Let $\{e_i\}$ be a local frame for $TN$ and let $\{e^i\}$ be the corresponding dual frame for $T^*N$. If
$\phi=\phi_ie^i$ is a
$1$-form, let
$\xi:=\phi_ie_i$ be the corresponding dual vector field.   We can establish some basic geometrical properties of Weyl manifolds as follows:

\begin{theorem}\label{thm-3.1}
Let $\nabla$ be a torsion free connection on a pseudo-Riemannian manifold $(N,g)$. Let $\phi$ be a smooth $1$-form on $N$.
\begin{enumerate}
\item The following assertions are equivalent and define the notion of a Weyl manifold:
\begin{enumerate}
\item $\nabla g=-2\phi\otimes g$.
\item $\nabla_xy=\nabla_x^gy+\phi(x)y+\phi(y)x-g(x,y)\xi$.
\end{enumerate}
\item If $(N,g,\nabla)$ is a Weyl manifold, then:
\begin{enumerate}
\item $d\phi=-\frac1n\rhoa$.
\item $R(x,y,z,w)+R(x,y,w,z)=\textstyle\frac2n\{\rho(y,x)-\rho(x,y)\}g(z,w)$.
\end{enumerate}\end{enumerate}
\end{theorem}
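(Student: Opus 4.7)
The plan is to treat the equivalence (1a)$\Leftrightarrow$(1b) by a direct algebraic computation, and to derive (2) from (1) by computing the curvature operator applied to $g$, viewed as a $(0,2)$-tensor, and then tracing.

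For (1a) $\Leftrightarrow$ (1b), I would set $T(x,y) := \phi(x) y + \phi(y) x - g(x,y) \xi$ and observe that $T$ is symmetric in $x,y$, so any connection of the form $\nabla = \nabla^g + T$ is automatically torsion free. For (1b) $\Rightarrow$ (1a), substitute this formula into $(\nabla_x g)(y,z) = x g(y,z) - g(\nabla_x y, z) - g(y, \nabla_x z)$, use $\nabla^g g = 0$, and expand; using $g(\xi,\cdot) = \phi$, four of the six $T$-terms cancel pairwise, leaving $-2\phi(x) g(y,z)$. For the converse, define $D := \nabla - T$; the same computation in reverse shows $Dg = 0$, while $D$ remains torsion free since $T$ is symmetric, so by uniqueness of the Levi-Civita connection $D = \nabla^g$.

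For (2b), I would compute $\mathcal{R}(x,y) g$ in two ways. Algebraically, the Leibniz rule on a $(0,2)$-tensor gives
\[
(\mathcal{R}(x,y) g)(z,w) = -R(x,y,z,w) - R(x,y,w,z).
\]
Analytically, one computes $(\nabla_x \nabla g)(y,z,w)$ by applying the Leibniz rule to $\nabla g = -2\phi \otimes g$; the delicate point is that after distributing the $x$-derivative, one must re-invoke (1a) to rewrite $x(g(z,w))$ in terms of $\nabla_x z$ and $\nabla_x w$, which produces a cross-term proportional to $\phi(x)\phi(y) g(z,w)$. Antisymmetrizing in $(x,y)$ kills this symmetric cross-term; the remaining $\phi$-derivative piece yields $d\phi(x,y)$ because $\nabla$ is torsion free. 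Equating the two expressions gives (2b). For (2a), I would trace this identity in the $(z,w)$ slot against $g^{-1}$: the right-hand side becomes a multiple of $n \, d\phi(x,y)$, while the first Bianchi identity applied to $R(x,y,e_i,e_i) + R(y,e_i,x,e_i) + R(e_i,x,y,e_i) = 0$ followed by summation expresses the trace on the left in terms of $\rho(y,x) - \rho(x,y) = -2\rhoa(x,y)$, yielding $d\phi$ as the stated scalar multiple of $\rhoa$.

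The main obstacle will be bookkeeping in the second-derivative computation on $g$: the Leibniz rule must be applied correctly to a $(0,3)$-tensor, and (1a) must be invoked twice --- once to expand $\nabla g$ and again inside the $x$-derivative --- or else the crucial $\phi \otimes \phi$ cancellation upon antisymmetrization will fail to materialize. Once this is handled, the rest of the argument is formal.
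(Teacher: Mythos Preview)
Your proposal is correct and takes a genuinely different, coordinate-free route from the paper's proof. For part~(1), the paper writes $\nabla = \nabla^g + \alpha$ in a local frame, translates (1a) and (1b) into the linear conditions $\alpha_{ijk}+\alpha_{ikj}=2\phi_i g_{jk}$ and $\alpha_{ijk}=\phi_i g_{jk}+\phi_j g_{ik}-\phi_k g_{ij}$ respectively, and shows these are equivalent by the standard index-cycling (Koszul) argument; you instead invoke the uniqueness of the Levi-Civita connection, which packages the same algebra into one line. For part~(2), the paper fixes $g$-geodesic coordinates at a point, expands $R_{ijkl}$ and $\rho_{jk}$ explicitly in terms of $\partial\alpha$ and quadratic $\alpha\cdot\alpha$ contributions, and verifies both identities by brute-force cancellation. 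Your approach of computing $\mathcal{R}(x,y)g$ in two ways is more conceptual: the tensorial Leibniz rule gives $(\mathcal{R}(x,y)g)(z,w) = -R(x,y,z,w) - R(x,y,w,z)$, while iterating (1a) gives $\mathcal{R}(x,y)g = -2\,d\phi(x,y)\,g$ (the $\phi\otimes\phi$ cross-terms cancel upon antisymmetrization exactly as you anticipate), yielding $R(x,y,z,w)+R(x,y,w,z) = 2\,d\phi(x,y)\,g(z,w)$; tracing against $g^{-1}$ with the Bianchi identity then gives (2a), and back-substitution gives (2b) as stated. Your method avoids any choice of coordinates and makes the role of the Weyl compatibility condition transparent; the paper's method is more hands-on and elementary but requires tracking considerably more terms.
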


\begin{proof}
Let $\Gamma_{ij}{}^k$ be the Christoffel symbols of the connection $\nabla$ and let $\Gamma_{ij}^g{}^k$ be the
Christoffel symbols of the Levi-Civita connection. We lower indices to define
$\Gamma_{ijk}$ and express
$\Gamma_{ijk}=\Gamma_{ijk}^g+\alpha_{ijk}$. Then:
$$g_{jk;i}:=e_ig_{jk}-g(\nabla_{e_i}e_j,e_k)-g(e_j,\nabla_{e_i}e_k)=-\alpha_{ijk}-\alpha_{ikj}\,.$$
Thus the assertion that $\nabla g=-2\phi\otimes g$ is equivalent to the identity:
\begin{equation}\label{eqn-3.a}
2\phi_ig_{jk}=\alpha_{ijk}+\alpha_{ikj}\,.
\end{equation}
Similarly, the assertion $\nabla_xy=\nabla_x^gy+\phi(x)y+\phi(y)x-g(x,y)\xi$ is equivalent to the identity:
\begin{equation}\label{eqn-3.b}
\alpha_{ijk}=\phi_ig_{jk}+\phi_jg_{ik}-\phi_kg_{ij}\,.
\end{equation}
Thus to establish that Assertions (1-a) and (1-b) are equivalent, we must show that Equations (\ref{eqn-3.a}) and (\ref{eqn-3.b})
are equivalent algebraically if we assume the symmetry $\alpha_{ijk}=\alpha_{jik}$ which is the condition that
$\nabla$ is torsion free. Suppose first that Equation (\ref{eqn-3.a}) is satisfied. We compute:
\medbreak\qquad
$\alpha_{ijk}=-\alpha_{ikj}+2\phi_ig_{jk}=-\alpha_{kij}+2\phi_ig_{jk}
=\alpha_{kji}+2\phi_ig_{jk}-2\phi_kg_{ij}$
\smallbreak\qquad\qquad
$=\alpha_{jki}+2\phi_ig_{jk}-2\phi_kg_{ij}
=-\alpha_{jik}+2\phi_ig_{jk}-2\phi_kg_{ij}+2\phi_jg_{ik}$.
\medbreak\noindent Equation (\ref{eqn-3.b}) now follows. Conversely, suppose that Equation (\ref{eqn-3.b}) is
satisfied. We complete the proof of Assertion (1) by checking Equation (\ref{eqn-3.a}) is satisfied:
\begin{eqnarray*}
\alpha_{ijk}+\alpha_{ikj}&=&\phi_ig_{jk}+\phi_jg_{ik}-\phi_kg_{ij}+\phi_ig_{kj}+\phi_kg_{ij}-\phi_jg_{ik}\\
&=&2\phi_ig_{jk}\,.
\end{eqnarray*}

Suppose that Assertion (1-b) is satisfied. We must establish that the identity of Equation (\ref{eqn-1.d}) holds. We have
$$
R_{ijkl}=\partial_i\Gamma_{jkl}-\partial_j\Gamma_{ikl}+g^{rs}(\Gamma_{irl}\Gamma_{jks}-\Gamma_{jrl}\Gamma_{iks})\,.
$$
Fix a point $P$ of $N$. Choose geodesic coordinates on $N$ which are centered at $P$ for the given metric $g$. The Christoffel symbols of
$\nabla^g$ then vanish at $P$ so we have:
\begin{equation}\label{eqn-3.c}
R_{ijkl}(0)
=R_{ijkl}^g(0)
+\{\partial_i\alpha_{jkl}-\partial_j\alpha_{ikl}+g^{rs}(\alpha_{irl}\alpha_{jks}-\alpha_{jrl}\alpha_{iks})\}(0)\,.
\end{equation}
By Equation (\ref{eqn-3.b}),
$$
g^{ik}\alpha_{ijk}=g^{ik}\{\phi_ig_{jk}+\phi_jg_{ik}-\phi_kg_{ij}\}
=\phi_j+n\phi_j-\phi_j=n\phi_j\,.
$$
We use Equation (\ref{eqn-3.c}) to see
\begin{equation}\label{eqn-3.d}
\rhoR{jk}(0)=\rhoR{jk}^g(0)+g^{il}\{\partial_i\alpha_{jkl}-\partial_j\alpha_{ikl}
+g^{rs}(\alpha_{irl}\alpha_{jks}-\alpha_{jrl}\alpha_{iks})\}(0)\,.
\end{equation}
We have that $\rhoR{jk}^g=\rhoR{kj}^g$ and that $dg(0)=0$. Thus when we anti-symmetrize Equation (\ref{eqn-3.d}) and use Equation
(\ref{eqn-3.b}) we see
\begin{eqnarray*}
&&(\rhoR{jk}-\rhoR{kj})(0)=g^{il}\{-\partial_j\alpha_{ikl}+\partial_k\alpha_{ijl}\}(0)\\
&=&g^{il}\{-\partial_j(\phi_ig_{kl}+\phi_kg_{il}-\phi_lg_{ik})+\partial_k(\phi_ig_{jl}+\phi_jg_{il}-\phi_lg_{ij})\}(0)\\
&=&g^{il}\{-\partial_j(\phi_kg_{il})+\partial_k(\phi_jg_{il})\}(0)=n\{\partial_k\phi_j-\partial_j\phi_k\}(0)\,.
\end{eqnarray*}
This shows that $\rhoa(0)=-\frac1nd\phi(0)$. Since the point $P$ was arbitrary, Assertion (2-a) follows.
Since 
$$R_{ijkl}^g+R_{ijlk}^g=0\quad\text{and}\quad\alpha_{ijk}+\alpha_{ikj}=2\phi_ig_{jk},$$
we may compute that:
\medbreak\qquad
$\{R_{ijkl}+R_{ijlk}\}(0)=
   \{\partial_i(\alpha_{jkl}+\alpha_{jlk})-\partial_j(\alpha_{ikl}+\alpha_{ilk})$
\smallbreak\qquad\qquad
$+g^{rs}(\alpha_{irl}\alpha_{jks}+\alpha_{irk}\alpha_{jls}-\alpha_{jrl}\alpha_{iks}-\alpha_{jrk}\alpha_{ils}\}(0)$
\smallbreak\qquad\qquad
$=\{2(\partial_i\phi_j-\partial_j\phi_i)g_{kl}\}(0)
+g^{rs}\{(-\alpha_{ilr}+2\phi_ig_{lr})\alpha_{jks}$
\smallbreak\qquad\qquad\quad
$+(-\alpha_{ikr}+2\phi_ig_{kr})\alpha_{jls}-\alpha_{jsl}\alpha_{ikr}-\alpha_{jsk}\alpha_{ilr}\}(0)$
\smallbreak\qquad\qquad
$=\{2(\partial_i\phi_j-\partial_j\phi_i)g_{kl}\}(0)+g^{rs}\{2\phi_ig_{lr}\alpha_{jks}-2\alpha_{ilr}\phi_jg_{ks}$
\smallbreak\qquad\qquad\quad
$+2\phi_ig_{kr}\alpha_{jls}-2\alpha_{ikr}\phi_jg_{ls}\}(0)$
\smallbreak\qquad\qquad
$=\{2(\partial_i\phi_j-\partial_j\phi_i)g_{kl}\}(0)+g^{rs}\{2\phi_i\alpha_{jkl}-2\phi_j\alpha_{ilk}$
\smallbreak\qquad\qquad\quad
$+2\phi_i\alpha_{jlk}-2\phi_j\alpha_{ikl}\}(0)$
\smallbreak\qquad\qquad
$=\{2(\partial_i\phi_j-\partial_j\phi_i)g_{kl}\}(0)+g^{rs}\{2\phi_i\phi_jg_{kl}-2\phi_j\phi_ig_{kl}\}$
\smallbreak\qquad\qquad
$=2\{(\partial_i\phi_j-\partial_j\phi_i)g_{kl}\}(0)=\frac2n\{(\rhoR{ji}-\rhoR{ij})g_{kl}\}(0)$.
\end{proof} 

\section{The proof of Theorem \ref{thm-1.3}}\label{sect-4}
Let $A\in\mathfrak{W}(V)$. We apply Theorem \ref{thm-2.1} to decompose $A=A^1+A^2$ for $A^1\in\mathfrak{A}(V)$ and for $A^2=\sigma(\psi)$ where
$\psi\in\Lambda^2$, i.e. 
$$
  A^2_{ijkl}=2\psi_{ij}\varepsilon_{kl}+\psi_{ik}\varepsilon_{jl}
  -\psi_{jk}\varepsilon_{il}-\psi_{il}\varepsilon_{jk}+\psi_{jl}\varepsilon_{ik}\,.
$$
By Theorem
\ref{thm-1.2}, choose a pseudo-Riemannian manifold $(N,g)$ which geometrically realizes $(V,h,A^1)$ at some point
$P$; identify $T_PM$ with $V$ henceforth and $h$ with $g(0)$.
Choose $g$-geodesic coordinates $(x^1,...,x^n)$ centered at $P$. Then $g_{ij}=\varepsilon_{ij}+O(|x|^2)$. Consider the $1$-form
$\phi:=\psi_{il}x^ldx^i$. We form the connection $\nabla$ of Theorem \ref{thm-3.1} to construct a Weyl manifold $(N,g,\nabla)$. The
Christoffel symbols of the connection are given by
$\Gamma_{ijk}=\Gamma_{ijk}^g+\alpha_{ijk}$ where
$$\alpha_{ijk}=\psi_{li}x^l\varepsilon_{jk}+\psi_{lj}x^l\varepsilon_{ik}-\psi_{lk}x^l\varepsilon_{ij}\,.$$
Since $g=g(0)+O(|x|^2)$, $\Gamma^g=O(|x|)$. Since $\alpha(0)=0$, $\Gamma(0)=0$ and thus:
\begin{eqnarray*}
&&R_{ijkl}(0)=(\partial_i\Gamma_{jkl}-\partial_j\Gamma_{ikl})(0)\\
&=&R_{ijkl}^g(0)+(\psi_{ij}\varepsilon_{kl}+\psi_{ik}\varepsilon_{jl}-\psi_{il}\varepsilon_{jk})
-(\psi_{ji}\varepsilon_{kl}+\psi_{jk}\varepsilon_{il}-\psi_{jl}\varepsilon_{ik})\\
&=&A^1_{ijkl}+A^2_{ijkl}\,.
\end{eqnarray*}
This completes the proof of Theorem \ref{thm-1.3}.\hfill\qedbox

\section{The proof of Theorem \ref{thm-1.5}}\label{sect-6}
We work in the algebraic setting to establish Theorem \ref{thm-1.5} (1).
Let $A\in\mathfrak{W}(V)$. We decompose $A=A_1+A_2$ where $A_1\in\mathfrak{A}(V)$ and $A_2=\sigma\psi$ for some
$\psi\in\Lambda^2(V)$. Since $A_1^*=A_1$ and $A_1$ satisfies the Bianchi identity, $A^*$ satisfies the Bianchi identity if and
only if
$A_2^*$ satisfies the Bianchi identity. We compute:
\begin{eqnarray*}
&&A_2(x,y,z,w)=2\psi(x,y)h( z,w)+\psi(x,z)h( y,w)-\psi(y,z)h( x,w)\\
&&\phantom{\sigma(\psi)(x,y,z,w):}-\psi(x,w)h( y,z)+\psi(y,w)h( x,z)\\
&&A_2^*(x,y,z,w)=-2\psi(x,y)h(w,z)-\psi(x,w)h(y,z)+\psi(y,w)h(x,z)\\
&&\phantom{\sigma(\psi)(x,y,w,z):}+\psi(x,z)h(y,w)-\psi(y,z)h(x,w)
\end{eqnarray*}
We check the Bianchi identity:
\begin{eqnarray*}
&&A_2^*(x,y,z,w)+A_2^*(y,z,x,w)+A_2^*(z,x,y,w)\\
&=&-2\psi(x,y)h(w,z)-\psi(x,w)h(y,z)+\psi(y,w)h(x,z)\\
&&-2\psi(y,z)h(w,x)-\psi(y,w)h(z,x)+\psi(z,w)h(y,x)\\
&&-2\psi(z,x)h(w,y)-\psi(z,w)h(x,y)+\psi(x,w)h(z,y)\\
&&\qquad+\psi(x,z)h(y,w)-\psi(y,z)h(x,w)\\
&&\qquad+\psi(y,x)h(z,w)-\psi(z,x)h(y,w)\\
&&\qquad+\psi(z,y)h(x,w)-\psi(x,y)h(z,w)\\
&=&-4\psi(x,y)h(w,z)-4\psi(y,z)h(w,x)-4\psi(z,x)h(w,y)\,.
\end{eqnarray*}
Suppose $n\ge3$ and that $A$ satisfies the Bianchi identity. Let $\{e_1,...,e_n\}$ be an orthonormal basis for $V$. Let $\{i,j,k\}$ be distinct
indices. We take
$x=e_i$,
$y=e_j$,
$z=w=e_k$ to conclude
$\psi(e_i,e_j)=0$. This proves $\psi=0$ so $A=A_1\in\mathfrak{A}(V)$. Conversely,
of course, if $A\in\mathfrak{A}(V)$, then $A^*=A\in\mathfrak{A}(V)$.

Next, we examine the geometric setting. Let $\mathcal{W}=(N,g,\nabla)$ be a Weyl manifold. We suppose the conjugate curvature
$R^*$ tensor of $\nabla$ is a generalized curvature tensor. By Assertion (1), that implies that $R$ is an algebraic curvature
tensor and hence $\rhoa=0$. Thus $d\phi=0$ by Theorem \ref{thm-3.1} (2a). Since we assumed the de Rham cohomology group
$H^1(N;\mathbb{R})=0$, we can choose $f\in C^\infty(N)$ so that
$\phi=df$. Then
$(N,e^{2f}g,\nabla)$ is a again a Weyl manifold where $\tilde\phi=\phi-df=0$. Thus $\nabla$ is the Levi-Civita connection
of the metric $e^{2f}g$.
\hfill\qed.

\section*{Acknowledgments} Research of the first and third  author was partially supported by DFG PI 158/4-6 (Germany), research of the second
author was
 partially supported by a research grant of the TU Berlin and by Project 144032 (Serbia).

\end{document}